\theoremstyle{plain}      
\newtheorem{thm}{Theorem}[section]     
\newtheorem{theorem}[thm]{Theorem}     
\newtheorem{corollary}[thm]{Corollary}     
\newtheorem{lemma}[thm]{Lemma}     
\newtheorem{prop}[thm]{Proposition}     
\newtheorem{proposition}[thm]{Proposition}     
\theoremstyle{remark}      
\newtheorem{examples}[thm]{Examples}
\newtheorem{remark}[thm]{Remark}
\theoremstyle{definition}
\def\be{{\beta}}
\def\Si{{\Sigma}}
\def\epsilon{{\varepsilon}}
\def\phi{{\varphi}}
\DeclareMathAlphabet{\doba}{U}{msb}{m}{n}
\def\ev{{\mathop{\rm ev}\nolimits}}
\newcommand{\definedas}{\mathrel{\raise.095ex\hbox{\rm :}\mkern-5.2mu=}}
\newcommand{\rquot}[2]{\raisebox{0.5ex}{$#1$}\!/\!\raisebox{-0.5ex}{$#2$}}
\definecolor{darkgreen}{HTML}{009900}
\definecolor{lightblue}{HTML}{00AAEE}
\definecolor{darkblue}{HTML}{002299}
\renewcommand\sout{\bgroup\markoverwith%
{\textcolor{red}{\rule[0.7ex]{3pt}{1.4pt}}}\ULon}
\def \be{\begin{eqnarray*}}
\def \ee{\end{eqnarray*}}
\def \ben{\begin{enumerate}}
\def \een{\end{enumerate}}
\def \beit{\begin{itemize}}
\def \eeit{\end{itemize}}
\def \bui#1#2{\mathrel{\mathop{\kern 0pt#1}\limits^{#2}}}
\def \buil#1#2{\mathrel{\mathop{\kern 0pt#1}\limits_{#2}}}
\def \bfll{\begin{flushleft}}
\def \efll{\end{flushleft}}
\def \bflr{\begin{flushright}}
\def \eflr{\end{flushright}}
\def \lra{\longrightarrow}
\def \wit{\widetilde}
\def \R{\mathbb{R}}
\begin{document}     


\title{Some examples of Dirac-harmonic maps}
\author{Bernd Ammann} 
\address{Fakult\"at f\"ur Mathematik \\ 
Universit\"at Regensburg \\
93040 Regensburg \\  
Germany}
\email{bernd.ammann@mathematik.uni-regensburg.de}

\author{Nicolas Ginoux} 
\address{Universit\'e de Lorraine, CNRS, IECL, F-57000 Metz, France}
\email{nicolas.ginoux@univ-lorraine.fr}

\begin{abstract}
We discuss a method to construct Dirac-harmonic maps developed by 
J.~Jost, X.~Mo and M.~Zhu in \cite{JostMoZhu}.
The method uses harmonic spinors and twistor spinors, and mainly applies to 
Dirac-harmonic maps of codimension $1$ with target spaces of constant sectional curvature.
Before the present article, it remained unclear 
when the conditions of the theorems in \cite{JostMoZhu} were fulfilled. 
We show that for isometric immersions into spaceforms, these conditions are fulfilled 
only under special assumptions.
In several cases we show the existence of solutions.
\end{abstract}

%

\date{\today}

\keywords{Dirac harmonic maps, twistor spinors} 
\setcounter{tocdepth}{1}
\maketitle

\tableofcontents

\section{Introduction and main results}\label{s:intromainresults}

Let $(M^m,g)$ and $(N^n,h)$ be Riemannian manifolds of dimension $m$ and $n$.
We assume that $M$ carries a fixed spin structure.
Note that in general we do not require $M$ and $N$ to be complete. 
Denote by $\Sigma M$ the corresponding spinor bundle of $M$.
Given a smooth map $f\colon M\lra N$, one can define the twisted Dirac-operator $D^f:=\sum_{j=1}^m e_j\cdot\nabla_{e_j}^{\Sigma M\otimes f^*TN}$ acting on $C^\infty(M,\Sigma M\otimes f^*TN)$, where $(e_j)_{1\leq j\leq m}$ is a local orthonormal frame on $M$ and ``$\,\cdot\,$'' denotes Clifford multiplication $T^*M\otimes\Sigma M\otimes f^*TN\lra \Sigma M\otimes f^*TN$.
Here $\Sigma M\otimes f^*TN$ is to be understood as the real tensor product of $\Sigma M$ with $f^*TN$ and is endowed with a natural Hermitian inner product $\langle\cdot\,,\cdot\rangle$ making the Clifford action of each tangent vector skew-Hermitian.\\
A pair $(f,\Phi)\in C^\infty(M,N)\times C^\infty(M,\Sigma M\otimes f^*TN)$ is called a
\emph{Dirac-harmonic map} if and only if the identities
\begin{equation}\label{eq:defDiracharm}
\left|\begin{array}{ll}D^f\Phi&=0\\ \mathrm{tr}_g(\nabla df)&=\frac12 V_{\Phi}\end{array}\right. 
\end{equation}
hold on $M$, where $V_{\Phi}\in C^\infty(M,f^*TN)$ is the section of $f^*TN$ defined by requiring
\begin{equation}\label{V.def}
h(V_{\Phi},Y):=\sum_{j=1}^m\langle e_j\cdot R_{Y,f_*e_j}^N\Phi,\Phi\rangle\textrm{ for all }Y\in f^*TN.
\end{equation}
Recall that, since the Clifford multiplication of each tangent vector to $M$ and the curvature tensor $R^N$ of $(N,h)$ act in a skew-Hermitian (resp. skew-symmetric) way, the sum $\sum_{j=1}^m\langle e_j\cdot R_{Y,f_*e_j}^N\Phi,\Phi\rangle$ is real.
Here and in the following the notation $e_j\cdot R_{Y,f_*e_j}^N\Phi$ stands for $(e_j\cdot\otimes R_{Y,f_*e_j}^N)\Phi$.
Our convention for curvature tensors is $R_{X,Y}^N=[\nabla_X^N,\nabla_Y^N]-\nabla_{[X,Y]}^N$ for all tangent vectors $X,Y$.\\

In the recent years, there was a considerable interest for 
Dirac-harmonic maps in geometric analysis.
The original motivation for studying Dirac-harmonic maps comes from physics: Dirac-harmonic maps 
are the fermionic analogue of the harmonic map equation.
While harmonic maps are stationary points of the classical (bosonic) energy functional 
$f\mapsto \frac12 \int_M |df|^2\, dv^M$, Dirac harmonic maps are stationary points of the functional $(f,\Phi)\mapsto \frac12 \int_M (|df|^2 +\<\Phi,D^f\Phi\>)\,dv^M$ which is interpreted as the fermionic counterpart of the classical energy functional.
In geometric analysis Dirac-harmonic maps turn out to be an interesting area of investigation,
as on the one hand side these equations are simple enough to allow regularity statements, removal of singularities, short-time existence of associated parabolic flows and much more, and on the other they are involved enough to exhibit a rich structure.\\

The goal of the article \cite{JostMoZhu}, written by J.~Jost, X.~Mo and M.~Zhu was to find solutions $(f,\Phi)$ to the Dirac-harmonic map equations (\ref{eq:defDiracharm}) in the form $(f,\Phi)$ where
\begin{equation}\label{eq:dhspecialform} 
\Phi:=\sum_{j=1}^m e_j\cdot\psi\otimes f_*e_j+\phi\otimes\nu,\end{equation}
such that $\psi,\phi\in C^\infty(M,\Sigma M)$ are untwisted spinor fields and such that $\nu\in C^\infty(M,f^*TN)$ is a vector field standing orthogonally onto $df(TM)=f_*(TM)$
at each point.
The first motivation for considering Dirac-harmonic maps in the form (\ref{eq:dhspecialform}) is that it gives a simple way to produce Dirac-harmonic maps when $M$ is a surface: if we assume $m=2$, that the map $f$ is harmonic, that $\psi$ is a twistor spinor, $\nu=0$, and $\varphi=0$, then the pair $(f,\Phi)$ is a Dirac-harmonic map, see \cite[Theorem 2]{JostMoZhu} and Corollary \ref{c:2dimtwsp} below.
In particular, a lot of examples of Dirac-harmonic maps can be exhibited when $M$ is conformally equivalent to an open subset of one of the model surfaces $\mathbb{S}^2$, $\R^2$ or $\mathbb{H}^2$ or conformally equivalent to a torus with trivial spin structure, we refer to Examples \ref{ex:dhtwistorspinordim2} for more details.
On the other hand closed hyperbolic surfaces do not carry nontrivial twistor spinors and therefore this ansatz is not sufficient to construct Dirac-harmonic maps on such surfaces.\\

As in \cite{JostMoZhu}, we mainly focus in this article on the particular situation where $f$ is an isometric immersion and $n=m+1$.
This allows us --- as usual for isometric immersions --- to identify $TM$ with a subbundle of $f^*TN$.
The above ansatz then easily generates nontrivial Dirac-harmonic maps with harmonic mapping component $f$ out of parallel spinors on $M^m$, see Proposition \ref{p:exdhhypspaceforms}.
However, the existence of nontrivial parallel spinors is very restrictive, making those examples actually very special.
To get examples of Dirac-harmonic maps with non-harmonic mapping component, we furthermore assume $N$ to be oriented with constant sectional curvature $c\in\R$.
Note that in this case the orientations of $M$ and $N$ induce a global smooth unit normal vector field $\nu$ on $f(M)$; any sign convention for the choice of $\nu$ can be used, but should be fixed throughout the article. 
Denote by $W:=-\nabla^N\nu$ the corresponding shape operator of the immersed hypersurface $M$ and by $H:=\frac{1}{m}\mathrm{tr}(W)$ its mean curvature.

\noindent We first characterize Dirac-harmonic maps of the form \eqref{eq:dhspecialform} in that setting (compare \cite[Thm.~1]{JostMoZhu}):

\begin{theorem}\label{t:dhhypspaceforms}
Let $f\colon M^m\lra N^{m+1}$ be an isometric
immersion from a connected Riemannian spin manifold $(M^m,g)$ into an oriented Riemannian manifold $(N^{m+1},h)$ with constant sectional curvature $c\in\R$.
Let $\nu\in \Gamma(f^*TN)$ be a unit normal vector field  of $f(M)\subset N$ with shape operator $W$ and mean curvature $H$ as explained above.
For $\psi,\phi\in C^\infty(M,\Sigma M)$ let $\Phi:=\sum_{j=1}^m e_j\cdot\psi\otimes e_j+\phi\otimes\nu$, where $(e_j)_{1\leq j\leq m}$ is any local orthonormal frame on $M$.
We assume that $\Phi$ does not vanish everywhere.
\beit\item[$i)$] If $m=2$, then $(f,\Phi)$ is a Dirac-harmonic map if and only if $H=0$, $D_M\phi=0$, $c\cdot\Re e(\langle\psi,\phi\rangle)=0$, and $e_1\cdot\nabla_{e_1}^{\Sigma M}\psi-e_2\cdot\nabla_{e_2}^{\Sigma M}\psi=\kappa_1\phi$, where $We_1=\kappa_1 e_1$.
(The vector $e_1$ is a pointwise eigenvector for $W$ associated to the principal curvature $\kappa_1$, we do not require $e_1$ to depend continuously on the basepoint.)
\item[$ii)$] If $m\geq 3$ \and $f$ is a totally umbilical immersion, then $(f,\Phi)$ is a Dirac-harmonic map if and only if $H=-c\Re e(\langle\psi,\phi\rangle)$, $D_M\phi=mH\psi$, $D_M\psi=-\frac{mH}{m-2}\phi$ and $P\psi=0$.
If furthermore $M$ is closed, then $(f,\Phi)$ is a Dirac-harmonic map  if and only if $W=0$, $D_M\phi=0$, $\nabla^{\Sigma M}\psi=0$, and $c\cdot\Re e(\langle\psi,\phi\rangle)=0$. 
\eeit
\end{theorem}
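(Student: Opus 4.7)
My plan is to substitute the ansatz \eqref{eq:dhspecialform} into the system \eqref{eq:defDiracharm}, decompose each equation along the orthogonal splitting $f^*TN = TM \oplus \R\nu$ provided by the immersion, and collect the resulting conditions. The computation splits naturally between the Dirac equation $D^f\Phi = 0$ and the map equation $\mathrm{tr}_g(\nabla df) = \tfrac{1}{2}V_\Phi$.

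For the Dirac equation, I expand $\nabla^{\Sigma M \otimes f^*TN}_{e_i}\Phi$ via the Leibniz rule using the Gauss--Weingarten formulas
\begin{equation*}
\nabla^{f^*TN}_X Y = \nabla^M_X Y + g(WX,Y)\nu, \qquad \nabla^{f^*TN}_X\nu = -WX
\end{equation*}
for $X \in TM$ and $Y$ tangent to $M$. Applying $\sum_i e_i\cdot$ and invoking the standard Clifford identities $\sum_i e_i\cdot e_i = -m$ and $\sum_i e_i\cdot X\cdot e_i = (m-2)X$ on spinors, $D^f\Phi$ reorganizes into a tangent-valued and a normal-valued component. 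Using the intrinsic Dirac operator $D_M$ and the twistor operator $P_X\psi := \nabla^{\Sigma M}_X\psi + \tfrac{1}{m}X\cdot D_M\psi$, the tangent component packages into a relation between $P\psi$, $W$, and $\phi$, while the normal component expresses a relation between $D_M\psi$, $D_M\phi$, and $H$.

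For the map equation, $\mathrm{tr}_g(\nabla df) = mH\nu$ since $f$ is isometric. To compute $V_\Phi$, I use the constant-curvature formula $R^N_{X,Y}Z = c(h(Y,Z)X - h(X,Z)Y)$, which makes \eqref{V.def} collapse into Clifford scalars in $\psi$ and $\phi$. Splitting $Y = Y^T + h(Y,\nu)\nu$ and exploiting skew-Hermiticity of Clifford multiplication shows that the tangential component of $V_\Phi$ vanishes (as required, since $\mathrm{tr}_g(\nabla df)$ is purely normal), while the normal component reduces to a scalar multiple of $c\,\mathrm{Re}\langle\psi,\phi\rangle$. Matching the two sides yields the scalar constraint $H = -c\,\mathrm{Re}\langle\psi,\phi\rangle$, which specializes in dimension $2$ (after combining with the Dirac equation) to the two conditions $H = 0$ and $c\,\mathrm{Re}\langle\psi,\phi\rangle = 0$.

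The claimed characterizations then drop out. In case $(i)$, the $2$-dimensional Clifford identity $\sum_i e_i \cdot X \cdot e_i = 0$ annihilates part of the tangential Dirac equation; evaluating the remainder in a pointwise eigenbasis of the trace-free operator $W$ gives $e_1\cdot\nabla_{e_1}^{\Sigma M}\psi - e_2\cdot\nabla_{e_2}^{\Sigma M}\psi = \kappa_1\phi$, while the normal component together with $H=0$ gives $D_M\phi = 0$. In case $(ii)$, umbilicity $W = H\cdot\id$ lets the shape operator factor out of the tangential equation, reducing it to $P\psi = 0$; the normal equation gives $D_M\phi = mH\psi$, and combining $P\psi = 0$ with $D^f\Phi = 0$ yields $D_M\psi = -\tfrac{mH}{m-2}\phi$. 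For the closed version, from $P\psi = 0$ one derives $\nabla_X^{\Sigma M}\psi = \tfrac{H}{m-2}X\cdot\phi$; integrating a Lichnerowicz-type identity against $\phi$ and using $H = -c\,\mathrm{Re}\langle\psi,\phi\rangle$ forces $H \equiv 0$, after which $\nabla^{\Sigma M}\psi = 0$, $D_M\phi = 0$ and $W = 0$ follow immediately. The main obstacle will be the Clifford-algebraic bookkeeping needed to cleanly separate the tangential and normal components of both $D^f\Phi$ and $V_\Phi$; once those identities are verified, the two cases reduce to straightforward algebraic manipulation.
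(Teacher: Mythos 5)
Your overall strategy coincides with the paper's: expand $D^f\Phi$ via the Gauss--Weingarten formulas, split into a tangential component $\frac{2-m}{m}e_j\cdot D_M\psi-2P_{e_j}\psi-We_j\cdot\phi=0$ and a normal component $D_M\phi=mH\psi$, compute $V_\Phi$ from the constant-curvature form of $R^N$ to get the scalar constraint $H=-c\,\Re e\langle\psi,\phi\rangle$, and then specialize. There is, however, a genuine gap in case $(i)$: you assert that in dimension $2$ the scalar constraint ``specializes, after combining with the Dirac equation, to $H=0$ and $c\,\Re e\langle\psi,\phi\rangle=0$'', but no pointwise manipulation yields this. Contracting the tangential equation with $\sum_j e_j\cdot$ gives $(m-2)D_M\psi+mH\phi=0$, i.e.\ $H\phi=0$ when $m=2$ --- which only kills $H$ where $\phi\neq0$. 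To get $H\equiv0$ one must argue globally: on the open set $\Omega=\{H\neq0\}$ one has $\phi=0$, hence $2H\psi=D_M\phi=0$ and so $\psi=0$ on $\Omega$, hence $\Phi=0$ on $\Omega$; by unique continuation for the elliptic operator $D^f$ on the \emph{connected} manifold $M$ this forces $\Phi\equiv0$, contradicting the standing hypothesis, so $\Omega=\varnothing$. This is the only place where connectedness and $\Phi\not\equiv0$ enter the proof of $(i)$, and your sketch omits it entirely.

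A smaller issue concerns the closed case of $(ii)$. Your proposed mechanism (``integrating a Lichnerowicz-type identity against $\phi$ and using $H=-c\,\Re e\langle\psi,\phi\rangle$'') is not what does the work, and as stated it is unclear it goes through. The paper's route is that $D_M\psi=-\frac{mH}{m-2}\phi$ and $D_M\phi=mH\psi$ combine to $D_M^2\psi=-\frac{m^2H^2}{m-2}\psi$ --- for which one needs $H$ constant, a consequence of total umbilicity via Codazzi--Mainardi that your sketch does not address --- and a negative eigenvalue of the nonnegative operator $D_M^2$ is impossible on a closed manifold; hence $H=0$ unless $\psi=\phi=0$, which is excluded. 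The constraint $H=-c\,\Re e\langle\psi,\phi\rangle$ plays no role in forcing $H=0$ here. Your integration idea can be repaired along these lines (e.g.\ $\int_M|D_M\psi|^2=\int_M\langle D_M^2\psi,\psi\rangle=-\frac{m^2H^2}{m-2}\int_M|\psi|^2\le0$), but you should state the actual identity and the constancy of $H$ explicitly.
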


\begin{remark}\label{rem.H.const}
Note that the assumptions that $f$ is totally umbilical in $ii)$ already implies that $H$ is constant, or more generally: any $m(\geq 2)$-dimensional 
totally umbilical hypersurface in an Einstein manifold has constant mean curvature. This is an elementary consequence of $\delta W=-mdH+\mathrm{Ric}^N(\nu)^T$, which itself follows from the Codazzi-Mainardi-identity (the $1$-form $\mathrm{Ric}^N(\nu)^T\in T^*M$ is defined by $\mathrm{Ric}^N(\nu)^T(X)=h(\mathrm{Ric}^N(\nu),X)$ for all $X\in TM$).
In particular, in the case $m\geq 3$ the existence of a Dirac-harmonic map $(f,\Phi)$ with $\Phi\neq0$ given by $\psi$ and $\phi$ as in the above theorem implies $D_M^2\psi=-\frac{m^2H^2}{m-2}\psi$ and $D_M^2\phi=-\frac{m^2H^2}{m-2}\phi$.
\end{remark}

\begin{remark}
If we allowed for $\phi=\psi=0$, then this theorem would reduce to the classical fact that an isometric immersion is harmonic, if and only if the image has vanishing mean cuvature.
\end{remark}

\begin{remark}
Our above theorem also shows that the conditions in \cite[Thm.~1]{JostMoZhu} are very restrictive in the case $m\geq 3$: the authors assume the spinor field $\phi$ to be harmonic, i.e. $D_M\phi=0$.
In this case, Theorem \ref{t:dhhypspaceforms} yields $H=0$ and $c\Re e(\langle\psi,\phi\rangle)=0$. 
Furthermore, $D_M\psi=0$ and $P\psi=0$ and this implies $\nabla^{\Sigma M}\psi=0$. 
As $M$ is isometrically immersed into $N$ with $W=H\cdot\mathrm{Id}=0$, it is a totally geodesic immersion, and $0=mH\cdot\nu=\mathrm{tr}_g(\nabla df)$.
In particular $f$ is harmonic, so no example with non-harmonic map $f$ can be produced. 
Assuming $\psi\not\equiv 0$, these conditions imply that $M$ is Ricci-flat, of special holonomy and $\nabla^{\Sigma M}\phi=0$ as soon as $M$ is closed.
\end{remark}

Theorem \ref{t:dhhypspaceforms} allows for producing new explicit examples of Dirac-harmonic maps.
Denote by $N^{m+1}(c)$ any Riemannian spaceform of constant sectional curvature $c$ and by $\wit{N}^{m+1}(c)$ the simply-connected complete Riemannian spaceform of constant sectional curvature $c$.
Replacing the the metric $h$ by $\lambda^2 h$ with a constant $\lambda>0$ does not change the Levi-Civita connection on the tangent bundle of the target,
and thus $D^f$ is unchanged as well.
In the case $c\neq 0$ we can achieve by such a rescaling with $\lambda:= \sqrt{|c|}$ that the rescaled metric has sectional curvature $\pm 1$. Thus we can assume without loss of generality  $c\in\{-1,0,1\}$, i.\thinspace e.\ $\wit{N}^{m+1}(c)=\mathbb{H}^{m+1}(-1)$, $\R^{m+1}$ and $\mathbb{S}^{m+1}(1)$ for $c=-1$, $0$ and $1$ respectively.

\begin{theorem}\label{t:dhtotumbilicalnonharmonic}
Let $f\colon M^m\to \wit{N}^{m+1}(c)$ be a non-minimal totally umbilical isometric immersion from a connected $m\geq3$-dimensional Riemannian spin manifold into $\wit{N}^{m+1}(c)$ for some $c\in\{-1,0,1\}$.
Then there exists a not identically vanishing $\Phi$ in the form \eqref{eq:dhspecialform} such that $(f,\Phi)$ is a Dirac-harmonic map if and only if 
$f(M)$ is an open subset of an umbilic hyperplane $\mathbb{H}^m(-\frac{4}{m+2})$ in $\wit{N}^{m+1}(-1)=\mathbb{H}^{m+1}(-1)$.
\end{theorem}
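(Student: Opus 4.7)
My plan is to combine Theorem \ref{t:dhhypspaceforms} $ii)$ with the classical eigenvalue identity $D_M^2\psi=\frac{m\,\mathrm{Scal}}{4(m-1)}\psi$ for twistor spinors on Einstein manifolds, and then to identify the admissible immersions via the Gauss equation and the classification of totally umbilical hypersurfaces in $\mathbb{H}^{m+1}(-1)$.

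\emph{Forward direction.} Assume $(f,\Phi)$ is Dirac-harmonic with $\Phi$ of the form \eqref{eq:dhspecialform} and $\Phi\not\equiv 0$. By Theorem \ref{t:dhhypspaceforms} $ii)$ and Remark \ref{rem.H.const}, $H$ is a nonzero constant, $\psi$ is a twistor spinor satisfying $D_M^2\psi=-\frac{m^2H^2}{m-2}\psi$, and the coupled identities $D_M\phi=mH\psi$, $D_M\psi=-\frac{mH}{m-2}\phi$ combined with $H\neq 0$ force $\psi\not\equiv 0$. Total umbilicity with shape operator $H\cdot\mathrm{Id}$ together with the Gauss equation make $M$ Einstein with constant sectional curvature $c+H^2$, so the Einstein twistor spinor identity gives $D_M^2\psi=\frac{m^2(c+H^2)}{4}\psi$. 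Comparing the two expressions for $D_M^2\psi$ yields $c=-\frac{(m+2)H^2}{m-2}$, which for $m\geq 3$ and $H\neq 0$ forces $c<0$, hence $c=-1$, $H^2=\frac{m-2}{m+2}$, and intrinsic sectional curvature $-\frac{4}{m+2}$. The classification of totally umbilical hypersurfaces of $\mathbb{H}^{m+1}(-1)$ (totally geodesic planes, equidistant hypersurfaces, horospheres, geodesic spheres) then singles out the equidistant family (since $|H|<1$), and the specific value of the intrinsic curvature pins down $f(M)$ as an open subset of a copy of $\mathbb{H}^m(-\frac{4}{m+2})$.

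\emph{Converse direction.} Conversely, fix $c=-1$ and $H^2=\frac{m-2}{m+2}$, so $M$ has constant sectional curvature $-\frac{4}{m+2}$ and therefore admits nonzero twistor spinors. Pick such a $\psi$ and set $\phi:=-\frac{m-2}{mH}D_M\psi$; then $P\psi=0$ and $D_M\psi=-\frac{mH}{m-2}\phi$ hold by construction, and $D_M\phi=mH\psi$ follows from the universal eigenvalue identity together with $H^2=\frac{m-2}{m+2}$. For the remaining pointwise condition $H=\Re e(\langle\psi,\phi\rangle)$, I would decompose $\psi=\psi_++\psi_-$ into its imaginary Killing spinor components (with Killing numbers $\pm i/\sqrt{m+2}$), compute $\phi$ in terms of $\psi_\pm$, and observe via the Killing equation that $\langle\psi_+,\psi_-\rangle$ is constant on $M$. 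After expansion the condition reduces to a single scalar equation requiring $\Im m(\langle\psi_+,\psi_-\rangle)$ to equal a prescribed nonzero value, which can be arranged by rescaling $\psi_\pm$ within their respective Killing spinor spaces.

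The main obstacle I foresee is this last tuning step: one must verify that the pairing $(\psi_+,\psi_-)\mapsto\langle\psi_+,\psi_-\rangle$ between the two spaces of imaginary Killing spinors on $\mathbb{H}^m(-\frac{4}{m+2})$ is non-degenerate, so that the desired imaginary part can indeed be realized. This should follow from the fact that pointwise evaluation at any $x\in M$ identifies each Killing spinor space with the full spin representation $\Sigma_xM$, but it is the only piece of the argument not immediately supplied by Theorem \ref{t:dhhypspaceforms} $ii)$ and the twistor eigenvalue identity.
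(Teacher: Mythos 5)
Your proposal is correct and follows essentially the same route as the paper: the twistor identity $D_M^2\psi=\frac{mS_g}{4(m-1)}\psi$ compared with $D_M^2\psi=-\frac{m^2H^2}{m-2}\psi$ plus the Gauss equation pins down $c=-1$, $H^2=\frac{m-2}{m+2}$ and intrinsic curvature $-\frac{4}{m+2}$, and the converse is handled exactly as you describe, by decomposing $\psi$ into imaginary Killing spinors with constants $\pm\frac{i}{\sqrt{m+2}}$, setting $\phi=-\frac{m-2}{mH}D_M\psi$, and tuning the constant $\Im m(\langle\psi_p,\psi_m\rangle)$ to $-\frac{1}{2\sqrt{m+2}}$. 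The one step you flag as an obstacle is settled in the paper precisely as you anticipate: the evaluation maps $\mathcal{K}_p\to\Sigma_xM$ and $\mathcal{K}_m\to\Sigma_xM$ are isomorphisms, so the required value of the imaginary part of the pairing can always be realized.
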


The following is then an immediate consequence of the theorem by applying the theorem to the lift $\tilde f:\wit{M}\to\mathbb{H}^{m+1}(-1)$. 
\begin{corollary}\label{c:dhtotumbilicalnonharmonic}
Let $f\colon M^m\to N^{m+1}$ be a non-minimal totally umbilical isometric immersion from a connected $m\geq3$-dimensional Riemannian spin manifold into a complete connected manifold $N$ of constant sectional curvature $c$. 
We assume that $(f,\Phi)$ is a Dirac-harmonic map, where
$\Phi$ is in the form \eqref{eq:dhspecialform}.
By Theorem \ref{t:dhtotumbilicalnonharmonic} we know that $c<0$ and by rescaling the metric on $N$ we can achieve $c=-1$.
Let $\tilde f:\wit{M}\to \wit{N}=\wit{N}^{m+1}(-1)=\mathbb{H}^{m+1}(-1)$ be a lift of $f$ to the universal covers.
Then up to isometry $\tilde f(\wit{M})$ is an open subset of a hyperplane $\mathbb{H}^m(-\frac{4}{m+2})$ in $\mathbb{H}^{m+1}(-1)$.
\end{corollary}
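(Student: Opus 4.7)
The plan is to reduce the statement to Theorem \ref{t:dhtotumbilicalnonharmonic} by lifting all data to universal covers. Since $N$ is complete of constant sectional curvature $c$, its Riemannian universal cover $\wit{N}$ is isometric to the simply-connected spaceform $\wit{N}^{m+1}(c)$. The spin structure on $M$ pulls back canonically to $\wit{M}$, and because $\wit{M}$ is simply connected, $f$ admits a lift $\wit{f}\colon\wit{M}\to\wit{N}$ to an isometric immersion; this lift inherits the properties of being totally umbilical and of being non-minimal, both of which are local.

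Next I would pull back the data defining $\Phi$. The spinor fields $\psi,\phi$ on $M$ and the unit normal $\nu$ along $f$ lift to the analogous objects on $\wit{M}$ and along $\wit{f}$, producing a section $\wit{\Phi}$ of $\Sigma\wit{M}\otimes\wit{f}^*T\wit{N}$ still of the form \eqref{eq:dhspecialform}; since the covering map $\wit{M}\to M$ is surjective and $\Phi$ does not vanish everywhere, $\wit{\Phi}$ also does not vanish identically. Because the Dirac-harmonic map equations \eqref{eq:defDiracharm} are local PDEs and the covering maps $\wit{M}\to M$, $\wit{N}\to N$ are spin-compatible local isometries, the pair $(\wit{f},\wit{\Phi})$ is again a Dirac-harmonic map.

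Finally, applying Theorem \ref{t:dhtotumbilicalnonharmonic} to $\wit{f}$ simultaneously yields both assertions of the corollary: it forces $c<0$, so that the rescaling to $c=-1$ is legitimate, and it identifies, up to an isometry of $\mathbb{H}^{m+1}(-1)$, the image $\wit{f}(\wit{M})$ with an open subset of the umbilic hyperplane $\mathbb{H}^m(-\tfrac{4}{m+2})$. The only step that deserves genuine attention — though it is not really an obstacle — is the compatibility check that the pullback of a Dirac-harmonic map under a spin-preserving Riemannian covering is again Dirac-harmonic; this reduces to the locality of $D^f$ and of $\mathrm{tr}_g(\nabla df)$, together with the fact that spinor bundles, Clifford multiplication and curvature tensors behave naturally under such coverings.
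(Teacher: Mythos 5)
Your proposal is correct and follows exactly the route the paper takes: the paper justifies the corollary in one line as ``an immediate consequence of the theorem by applying the theorem to the lift $\tilde f\colon\wit{M}\to\mathbb{H}^{m+1}(-1)$'', and your argument simply spells out the routine verifications (existence of the lift, naturality of the spin data, locality of the Dirac-harmonic equations) that the paper leaves implicit.
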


\noindent Note that any connected totally umbilical isometrically immersed hypersurface in $\mathbb{H}^{m+1}(-1)$ is an open subset of some $M^m(\kappa)$ with $\kappa\geq-1$.
Here $M^m(\kappa)$ is the canonically embedded complete hypersurface of constant curvature $\kappa$ in $\mathbb{H}^{m+1}(-1)$, that is, $M^m(\kappa)$ is
\begin{itemize}
 \item $\mathbb{H}^m(\kappa)$ for $\kappa\in[-1,0)$, 
 \item a horosphere $\mathbb{R}^m$ if $\kappa=0$, 
 \item the boundary of a geodesic ball if $\kappa>0$.
\end{itemize}

The nontrivial statement in Corollary \ref{c:dhtotumbilicalnonharmonic} is that only the first case can arise and that the value of $\kappa$ is $-\frac{4}{m+2}$.\\

These notes started in 2011 as an informal comment to the authors of \cite{JostMoZhu} in order to lay the basis for our article \cite{AmmannGinoux2011}.
The original title was ``\emph{Examples of Dirac-harmonic maps after Jost-Mo-Zhu}''.
As these informal notes were cited by several authors, we decided in 2018 to transform them into a proper publication made accessible to everyone.

\section{Proof of main results}\label{s:proof}

The proof starts with two calculations of central importance.
Denote by 
  $$D_M:=\sum_{j=1}^m e_j\cdot\nabla_{e_j}^{\Sigma M}\colon C^\infty(M,\Sigma M)\to C^\infty(M,\Sigma M)$$ 
  the classical Dirac operator by Atiyah and Singer and by 
  $$P\colon C^\infty(M,\Sigma M)\rightarrow C^\infty(M,T^*M\otimes\Sigma M),\quad \psi\mapsto\nabla^{\Sigma M}\psi+\frac{1}{m}\cdot\sum_{j=1}^m e_j^\flat\otimes e_j\cdot D_M\psi$$ 
the Penrose (or twistor) operator on $M$.

\begin{lemma}\label{l:dheqgeneral-one}
With the above notations, one has for $f$ and $\Phi$ given by \eqref{eq:dhspecialform}
\be 
D^f\Phi&=&\sum_{j=1}^m\left(\frac{2-m}{m}e_j\cdot D_M\psi-2P_{e_j}\psi\right)\otimes f_*e_j-\psi\otimes\mathrm{tr}_g(\nabla df)\\
& &{}+(D_M\phi)\otimes\nu+\sum_{j=1}^me_j\cdot\phi\otimes\nabla_{e_j}^N\nu.
\ee
\end{lemma}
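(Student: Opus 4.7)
The plan is to compute $D^f\Phi$ by applying the Leibniz rule term-by-term on the two summands of $\Phi$, exploiting a synchronous frame and the Clifford anticommutation relations, and finally recognizing combinations that assemble into the Penrose operator.

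I would begin by fixing an arbitrary point $p\in M$ and choosing the local orthonormal frame $(e_j)$ to be \emph{synchronous} at $p$, meaning $\nabla^M_{e_j}e_k(p)=0$ for all $j,k$. All pointwise identities are to be verified at $p$ (and thus hold everywhere). The operator $D^f$ is the Dirac operator associated to the tensor product connection on $\Sigma M\otimes f^*TN$, so on an elementary tensor $\sigma\otimes V$,
\[
D^f(\sigma\otimes V)=\sum_{j=1}^m e_j\cdot\nabla^{\Sigma M}_{e_j}\sigma\otimes V+\sum_{j=1}^m e_j\cdot\sigma\otimes\nabla^{f^*TN}_{e_j}V,
\]
where $\nabla^{f^*TN}_{e_j}V=\nabla^N_{f_*e_j}V$ is the pullback connection. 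For sections $f_*e_k$ of $f^*TN$ this yields $\nabla^{f^*TN}_{e_j}(f_*e_k)=f_*(\nabla^M_{e_j}e_k)+(\nabla df)(e_j,e_k)$, which at $p$ reduces to $(\nabla df)(e_j,e_k)$.

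Splitting $\Phi=\Phi_1+\Phi_2$ with $\Phi_1=\sum_k e_k\cdot\psi\otimes f_*e_k$ and $\Phi_2=\phi\otimes\nu$, I would compute $D^f\Phi_1$ at $p$ as
\[
D^f\Phi_1=\sum_{j,k}e_j\cdot e_k\cdot\nabla^{\Sigma M}_{e_j}\psi\otimes f_*e_k+\sum_{j,k}e_j\cdot e_k\cdot\psi\otimes(\nabla df)(e_j,e_k).
\]
For the first double sum, I would fix $k$ and use $e_j\cdot e_k=-e_k\cdot e_j-2\delta_{jk}$ to obtain
\[
\sum_j e_j\cdot e_k\cdot\nabla^{\Sigma M}_{e_j}\psi=-e_k\cdot D_M\psi-2\nabla^{\Sigma M}_{e_k}\psi,
\]
and then substitute $\nabla^{\Sigma M}_{e_k}\psi=P_{e_k}\psi-\tfrac{1}{m}e_k\cdot D_M\psi$ to recognize the factor $\tfrac{2-m}{m}e_k\cdot D_M\psi-2P_{e_k}\psi$, producing the first term of the claim. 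For the second double sum, $e_j\cdot e_k$ is antisymmetric in $(j,k)$ for $j\neq k$ whereas $(\nabla df)(e_j,e_k)$ is symmetric, so only the diagonal $j=k$ survives and, using $e_j\cdot e_j=-1$, gives precisely $-\psi\otimes\mathrm{tr}_g(\nabla df)$.

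The computation of $D^f\Phi_2$ is direct: Leibniz produces $D_M\phi\otimes\nu+\sum_j e_j\cdot\phi\otimes\nabla^N_{e_j}\nu$, matching the two remaining terms of the claim. Combining everything yields the stated formula. The only mildly subtle point is the transition from $-e_k\cdot D_M\psi-2\nabla^{\Sigma M}_{e_k}\psi$ to the Penrose-plus-Dirac form with the precise coefficient $\tfrac{2-m}{m}$; everything else is routine bookkeeping with Clifford relations and the pullback connection. No dimensional restriction, no curvature assumption on $N$, and no isometric-immersion hypothesis are used at this stage.
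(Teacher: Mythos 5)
Your proposal is correct and follows essentially the same route as the paper: split $\Phi$ into the two summands, apply the Leibniz rule for the tensor-product connection, use the Clifford anticommutation relation and the symmetry of $\nabla df$ to isolate the trace term, and rewrite $\nabla^{\Sigma M}_{e_k}\psi$ via the Penrose operator to obtain the coefficient $\tfrac{2-m}{m}$. The paper explicitly mentions the synchronous-frame trick as one of two equivalent ways to dispose of the frame-derivative terms, so your choice there is also consistent with its argument.
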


\begin{proof}
We set $\Psi:=\sum_{j=1}^me_j\cdot\psi\otimes f_*e_j$ and compute
\begin{align*}
D^f\Psi=&\sum_{j=1}^me_j\cdot\nabla_{e_j}^{\Sigma M\otimes f^*TN}(\sum_{k=1}^me_k\cdot\psi\otimes f_*e_k)\\
=&\sum_{j,k=1}^m\Bigl(e_j\cdot \nabla_{e_j}^Me_k\cdot\psi\otimes f_*e_k+e_j\cdot e_k\cdot\nabla_{e_j}^{\Sigma M}\psi\otimes f_*e_k\\
 &\phantom{\sum_{j,k=1}^m}+e_j\cdot e_k\cdot\psi\otimes\nabla_{e_j}^{f^*TN}f_*e_k\Bigr)\\
=&{}-\sum_{j,k=1}^me_k\cdot e_j\cdot\nabla_{e_j}^{\Sigma M}\psi\otimes f_*e_k-2\sum_{j,k=1}^m\underbrace{g(e_j,e_k)}_{=\delta_{jk}}\nabla_{e_j}^{\Sigma M}\psi\otimes f_*e_k\\
 &{}+\sum_{j,k=1}^m e_j\cdot e_k\cdot\psi\otimes(\nabla df)(e_j,e_k)\\
 &{}+\underbrace{\sum_{j,k=1}^m\Bigl(e_j\cdot \nabla_{e_j}^Me_k\cdot\psi\otimes f_*e_k+e_j\cdot e_k\cdot\psi\otimes f_*(\nabla_{e_j}^Me_k)\Bigr)}_{0}.
\end{align*}
Here we used $\sum_{k=1}^m\Big(e_j\cdot \nabla_{e_j}^Me_k\cdot\psi\otimes f_*e_k+e_j\cdot e_k\cdot\psi\otimes f_*(\nabla_{e_j}^Me_k)\Bigr)=0$. This can either be seen by taking a frame 
with $\nabla_{e_j}^Me_k|_p=0$ for some $p\in M$ and all $j,k$, and to calculate in $p$, or alternatively 
by writing $\nabla_{e_j}^Me_k=\sum_{\ell=1}^m\Gamma_{jk}^\ell e_\ell$ and using $\Gamma_{jk}^\ell=-\Gamma_{j\ell}^k$.
Furthermore for $j\neq k$ the expression $e_j\cdot e_k\cdot\psi$ is antisymmetric for permuting $j$ and $k$ while $(\nabla df)(e_j,e_k)$ is symmetric, thus all terms 
$e_j\cdot e_k\cdot\psi\otimes(\nabla df)(e_j,e_k)$ cancel for $j\neq k$. We continue the computation:
\begin{align*}
D^f\Psi\;=\;&{}-\sum_{k=1}^me_k\cdot D_M\psi\otimes f_*e_k-2\sum_{k=1}^m\nabla_{e_k}^{\Sigma M}\psi\otimes f_*e_k-\underbrace{\sum_{k=1}^m \psi\otimes(\nabla df)(e_k,e_k)}_{=\psi\otimes\mathrm{tr}_g(\nabla df)}\\
\;=\;&\frac{2-m}{m}\sum_{k=1}^me_k\cdot D_M\psi\otimes f_*e_k-2\sum_{k=1}^mP_{e_k}\psi\otimes f_*e_k-\psi\otimes\mathrm{tr}_g(\nabla df),
\end{align*}
where we used the definition of $P$.
On the other hand,
\be 
D^f(\phi\otimes\nu)&=&\sum_{j=1}^me_j\cdot\nabla_{e_j}^{\Sigma M\otimes f^*TN}(\phi\otimes\nu)\\
&=&\sum_{j=1}^me_j\cdot\left(\nabla_{e_j}^{\Sigma M}\phi\otimes\nu+\phi\otimes\nabla_{e_j}^{f^*TN}\nu\right)\\
&=&(D_M\phi)\otimes\nu+\sum_{j=1}^me_j\cdot\phi\otimes\nabla_{e_j}^N\nu.
\ee
Using $D^f\Phi=D^f\Psi+D^f(\phi\otimes\nu)$ this yields the claimed formula for $D^f\Phi$.
\end{proof}

\begin{lemma}\label{l:dheqgeneral-two}
Again we use the above notations, we assume that $\Phi$ is given by~\eqref{eq:dhspecialform} and that $V_{\Phi}$ is given by~\eqref{V.def}.
Then we have for all $Y\in f^*TN$,
$$
h(V_{\Phi},Y)=2\sum_{j,k=1}^m h(R_{Y,f_*e_j}^Nf_*e_k,\nu)\,\Re e(\langle e_j\cdot e_k\cdot\psi,\phi\rangle).
$$ 
\end{lemma}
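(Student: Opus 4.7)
The plan is to substitute $\Phi=\Psi+\phi\otimes\nu$ with $\Psi:=\sum_{k=1}^m e_k\cdot\psi\otimes f_*e_k$ into the definition of $V_\Phi$ and expand $h(V_\Phi,Y)=\sum_j\langle e_j\cdot R^N_{Y,f_*e_j}\Phi,\Phi\rangle$ by bilinearity into four pieces $(a),(b),(c),(d)$ corresponding to the pairings $(\Psi,\Psi)$, $(\Psi,\phi\otimes\nu)$, $(\phi\otimes\nu,\Psi)$, and $(\phi\otimes\nu,\phi\otimes\nu)$ across the two slots of $\langle\cdot,\cdot\rangle$. Since $R^N$ acts only on the $f^*TN$-factor and Clifford multiplication only on the spinor factor, each piece reduces to a sum whose generic summand is a product of a spinor bilinear with a component of $R^N$ at arguments built from $Y$, $f_*e_j$, $f_*e_k$, $f_*e_\ell$ and $\nu$.

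The normal-normal piece $(d)$ vanishes at once since $h(R^N_{Y,f_*e_j}\nu,\nu)=0$ by antisymmetry of $R^N$ in its last two slots. For the cross terms, one reads off $(b)=\sum_{j,k}\langle e_j\cdot e_k\cdot\psi,\phi\rangle\,h(R^N_{Y,f_*e_j}f_*e_k,\nu)$ directly, and in $(c)$ one uses (i)~the antisymmetry $h(R^N_{Y,f_*e_j}\nu,f_*e_\ell)=-h(R^N_{Y,f_*e_j}f_*e_\ell,\nu)$, (ii)~the skew-Hermiticity of Clifford multiplication, and (iii)~the identity $\overline{\langle\alpha,\beta\rangle}=\langle\beta,\alpha\rangle$ to rewrite $\langle e_j\cdot\phi,e_\ell\cdot\psi\rangle=-\overline{\langle e_j\cdot e_\ell\cdot\psi,\phi\rangle}$. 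This yields $(c)=\sum_{j,k}\overline{\langle e_j\cdot e_k\cdot\psi,\phi\rangle}\,h(R^N_{Y,f_*e_j}f_*e_k,\nu)$, so that $(b)+(c)=2\sum_{j,k}\Re e(\langle e_j\cdot e_k\cdot\psi,\phi\rangle)\,h(R^N_{Y,f_*e_j}f_*e_k,\nu)$, which is precisely the announced right-hand side.

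The main obstacle is to check that the purely tangential piece
$$(a)=\sum_{j,k,\ell=1}^m\langle e_j\cdot e_k\cdot\psi,e_\ell\cdot\psi\rangle\,h(R^N_{Y,f_*e_j}f_*e_k,f_*e_\ell)$$
vanishes. My strategy is to antisymmetrize the summand in the pair $(k,\ell)$, permissible since the curvature factor is already antisymmetric there. A short case-by-case check (using only the Clifford relations $e_p\cdot e_p=-1$, skew-Hermiticity, and the fact that $\langle\psi,e_p\cdot\psi\rangle$ is purely imaginary) shows that all contributions with $j\in\{k,\ell\}$ cancel, leaving only the summands with $j,k,\ell$ pairwise distinct. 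On these, skew-Hermiticity together with the anticommutation $e_\ell\cdot e_j=-e_j\cdot e_\ell$ gives $\langle e_j\cdot e_k\cdot\psi,e_\ell\cdot\psi\rangle=\langle e_j\cdot e_\ell\cdot e_k\cdot\psi,\psi\rangle$, and this last bilinear is totally antisymmetric in $(j,k,\ell)$. Pairing a totally antisymmetric tensor against $h(R^N_{Y,f_*e_j}f_*e_k,f_*e_\ell)$ selects the totally antisymmetric part of the latter in $(j,k,\ell)$, which vanishes by the first Bianchi identity on $R^N$. Hence $(a)=0$, and combining the three contributions yields the stated formula.
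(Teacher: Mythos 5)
Your proposal is correct and follows essentially the same route as the paper: decompose $\Phi=\Psi+\phi\otimes\nu$, kill the normal--normal and tangential--tangential pieces, and read off the cross terms as $2\Re e(\cdot)$. The only cosmetic differences are that the paper invokes Hermiticity of $\Phi\mapsto\sum_j e_j\cdot R^N_{Y,f_*e_j}\Phi$ to combine the two cross terms at the outset (where you verify they are complex conjugates by hand), and for the tangential piece it groups the nondegenerate summands into $\rquot{\mathbb{Z}}{3\mathbb{Z}}$-orbits using the cyclic invariance of $e_\ell\cdot e_j\cdot e_k$ rather than your full $S_3$-antisymmetrization --- both reduce to the first Bianchi identity.
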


\begin{proof}As in the proof of Lemma \ref{l:dheqgeneral-two}, let $\Psi:=\sum_{j=1}^me_j\cdot\psi\otimes f_*e_j$ and $V_\Psi$ be the associated vector field as in \eqref{V.def}. 
Recall that $\Phi\mapsto\sum_{j=1}^me_j\cdot R_{Y,f_*e_j}^N\Phi$ is Hermitian, in particular
\[h(V_{\Phi},Y)=h(V_{\Psi},Y)+h(V_{\phi\otimes\nu},Y)+2\Re e\left(\sum_{j=1}^m\langle e_j\cdot R_{Y,f_*e_j}^N\Psi,\phi\otimes\nu\rangle\right)\]
for all $Y\in f^*TN$.
We compute each term separately.
First,
\be 
h(V_{\Psi},Y)&=&\sum_{j,k,\ell=1}^m\Re e\left(\langle e_j\cdot R_{Y,f_*e_j}^N(e_k\cdot\psi\otimes f_*e_k),e_\ell\cdot\psi\otimes f_*e_\ell\rangle\right)\\
&=&\sum_{j,k,\ell=1}^m\Re e\left(\langle (e_j\cdot e_k\cdot \psi)\otimes R_{Y,f_*e_j}^N f_*e_k,e_\ell\cdot\psi\otimes f_*e_\ell\rangle\right)\\
&=&\sum_{j,k,\ell=1}^m h(R_{Y,f_*e_j}^N f_*e_k,f_*e_\ell)\,\Re e\left(\langle e_j\cdot e_k\cdot \psi,e_\ell\cdot\psi\rangle\right)\\
&=&-\sum_{j,k,\ell=1}^m h(R_{Y,f_*e_j}^N f_*e_k,f_*e_\ell)\,\Re e\left(\langle e_\ell\cdot e_j\cdot e_k\cdot \psi,\psi\rangle\right).
\ee
This sum contains different kind of terms. For with $j=k$ we calculate
  $$\Re e\left(\langle e_\ell\cdot e_j\cdot e_j\cdot \psi,\psi\rangle\right)=-\Re e\left(\langle e_\ell\cdot \psi,\psi\rangle\right)=0$$
and with similar arguments all terms with $k=\ell$ or $j=\ell$ vanish, including $j=k=\ell$.
Given a fixed triple $(j,k,\ell)$ with $j\neq k\neq \ell\neq j$, consider the $\rquot{\mathbb{Z}}{3\mathbb{Z}}$-action given by the cyclic permutation sending $j$ on $k$ and $k$ on $\ell$.
Then the sum corresponding to the $\rquot{\mathbb{Z}}{3\mathbb{Z}}$-orbit vanishes: by definition of the Clifford multiplication,
$$ e_\ell\cdot e_j\cdot e_k=  e_k\cdot e_\ell\cdot e_j = e_j \cdot e_k\cdot e_\ell,$$
so that, using the first Bianchi identity for the curvature tensor of $(N,h)$,
\begin{align*}
&\;\sum_{\sigma\in\rquot{\mathbb{Z}}{3\mathbb{Z}}}h(R_{Y,f_*e_{\sigma(j)}}^N f_*e_{\sigma(k)},f_*e_{\sigma(\ell)})\,\Re e\left(\langle e_{\sigma(\ell)}\cdot e_{\sigma(j)}\cdot e_{\sigma(k)}\cdot \psi,\psi\rangle\right)\\
= &\;\left(h(R_{Y,f_*e_j}^N f_*e_k,f_*e_\ell)+h(R_{Y,f_*e_k}^N f_*e_\ell,f_*e_j)+h(R_{Y,f_*e_\ell}^N f_*e_j,f_*e_k)\right)\\
 &\;\quad\cdot \Re e\left(\langle e_\ell\cdot e_j\cdot e_k\cdot \psi,\psi\rangle\right)\\
=&\;0.
\end{align*}
Therefore, $V_{\Psi}=0$.

For $\phi\otimes\nu$, using $h(R_{Y,f_*e_j}^N\nu,\nu)=0$, we obtain
\be 
h(V_{\phi\otimes\nu},Y)&=&\sum_{j=1}^m\Re e\left(\langle (e_j\cdot\phi)\otimes R_{Y,f_*e_j}^N\nu,\phi\otimes\nu\rangle\right)\\
&=&\sum_{j=1}^mh(R_{Y,f_*e_j}^N\nu,\nu)\,\Re e\left(\langle e_j\cdot\phi,\phi\rangle\right)\\
&=&0,
\ee
so that $V_{\phi\otimes\nu}=0$.
As for the cross term, we obtain
\be 
\Re e\left(\sum_{j=1}^m\langle e_j\cdot R_{Y,f_*e_j}^N\Psi,\phi\otimes\nu\rangle\right)&=&\sum_{j,k=1}^m\Re e\left(\langle (e_j\cdot e_k\cdot\psi)\otimes R_{Y,f_*e_j}^Nf_*e_k,\phi\otimes\nu\rangle\right)\\
&=&\sum_{j,k=1}^m h(R_{Y,f_*e_j}^Nf_*e_k,\nu)\,\Re e\left(\langle e_j\cdot e_k\cdot\psi,\phi\rangle\right).
\ee
The result follows.
\end{proof}

As a straightforward consequence of Lemmata~\ref{l:dheqgeneral-one} and~\ref{l:dheqgeneral-two}, we reprove \cite[Theorem 2]{JostMoZhu} by J.~Jost, X.~Mo and M.~Zhu. 

\begin{corollary}\label{c:2dimtwsp}
Let $m=2$, assume that the spinor field $\psi$ is a twistor spinor, that~$\phi$ is the zero section and that the map $f$ is harmonic.
Then $(f,\Phi)$, defined by \eqref{eq:dhspecialform}, is a Dirac-harmonic map.
\end{corollary}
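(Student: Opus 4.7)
The plan is straightforward: substitute the three hypotheses $m=2$, $P\psi=0$, and $\phi\equiv0$ into the identities furnished by Lemmas~\ref{l:dheqgeneral-one} and~\ref{l:dheqgeneral-two}, and use harmonicity of $f$ to close the argument. Note that when $\phi=0$, the normal vector $\nu$ in~\eqref{eq:dhspecialform} plays no role, so that $\Phi$ reduces to $\Psi:=\sum_{j=1}^2 e_j\cdot\psi\otimes f_*e_j$; in particular no hypothesis on the codimension of $f$ is actually used.

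First, I would derive $D^f\Phi=0$ from Lemma~\ref{l:dheqgeneral-one} by eliminating each of its four contributions in turn: the prefactor $\tfrac{2-m}{m}$ vanishes in dimension two, killing the $e_j\cdot D_M\psi$ sum; the twistor equation $P\psi=0$ kills the $P_{e_j}\psi$ sum; harmonicity of $f$ kills the $\psi\otimes\mathrm{tr}_g(\nabla df)$ term; and $\phi=0$ kills the two remaining terms involving $D_M\phi$ and $e_j\cdot\phi\otimes\nabla^N_{e_j}\nu$.

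Next, I would verify the mapping equation $\mathrm{tr}_g(\nabla df)=\tfrac12 V_\Phi$. The left-hand side vanishes by harmonicity of $f$. For the right-hand side I would appeal to Lemma~\ref{l:dheqgeneral-two}: inspection of its proof shows that the $\Psi$-contribution $V_\Psi$ vanishes in full generality by the first Bianchi identity applied to each $\mathbb{Z}/3\mathbb{Z}$-orbit of index triples, and both the $V_{\phi\otimes\nu}$-term and the cross term vanish trivially once $\phi=0$. Hence $V_\Phi=0$ and the mapping equation holds.

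The main obstacle is essentially nonexistent: both Dirac-harmonic equations collapse termwise under the given hypotheses. The only subtlety worth flagging is that the vanishing of $V_\Psi$ used here is a general fact about the cubic spinor-curvature sum and does not require $f$ to be an isometric immersion or of codimension one; this ensures that the corollary applies to arbitrary smooth maps $f\colon M^2\to N$ with $f$ harmonic and $\psi$ twistor.
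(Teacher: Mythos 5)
Your proposal is correct, and the first half coincides with the paper's proof: $D^f\Phi=0$ follows from Lemma~\ref{l:dheqgeneral-one} exactly by the termwise cancellation you describe ($\tfrac{2-m}{m}=0$, $P\psi=0$, $\mathrm{tr}_g(\nabla df)=0$, $\phi=0$). For the mapping equation the two arguments diverge slightly. The paper does not invoke Lemma~\ref{l:dheqgeneral-two} at all here; it observes instead that in dimension two every inner product $\langle e_j\cdot e_k\cdot\psi,e_\ell\cdot\psi\rangle$ with $j,k,\ell\in\{1,2\}$ is purely imaginary (any such triple Clifford product reduces to multiplication by a single vector, which is skew-Hermitian), so the real parts defining $V_\Phi$ all vanish. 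You instead quote the general vanishing of $V_\Psi$ established inside the proof of Lemma~\ref{l:dheqgeneral-two} via the first Bianchi identity, together with the trivial vanishing of the remaining terms when $\phi=0$; equivalently, you could simply set $\phi=0$ in the stated formula of that lemma. Your route is valid and has the advantage of working uniformly in all dimensions $m$ and for arbitrary target codimension, which matches how the corollary is actually used in Examples~\ref{ex:dhtwistorspinordim2}; the paper's route is a quicker self-contained observation special to $m=2$ that avoids re-entering the proof of the lemma. Your remark that neither the codimension-one hypothesis nor the isometric-immersion hypothesis is needed is accurate.
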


\begin{proof}
Lemma~\ref{l:dheqgeneral-one} implies $D^f\Phi=0$.
Furthermore $V_{\Phi}$ vanishes since the Hermitian inner product $\langle e_j\cdot e_k\cdot\psi,e_l\cdot\psi\rangle$ is purely imaginary for all $j,k,l\in\{1,2\}$.
\end{proof}

\begin{examples}\label{ex:dhtwistorspinordim2}\
{\rm\begin{enumerate}
\item The two-dimensional round sphere $\mathbb{S}^2$ carries a $4$-dimensional space of twis\-tor spinors; a twistor spinor on $\mathbb{S}^2$ is the sum of a $\frac{1}{2}$- and of a $-\frac{1}{2}$-Killing spinor, see e.g. \cite{BFGK} or \cite[App. A]{Ginoux09}.
By Corollary \ref{c:2dimtwsp}, for any harmonic map $f\colon\mathbb{S}^2\to N^n$, there exists a nonzero $\Phi\in C^\infty(\mathbb{S}^2,\Sigma \mathbb{S}^2\otimes f^*TN)$ such that $(f,\Phi)$ is a Dirac-harmonic map.
\item If $M^2$ is the flat plane $\R^2$ or any open subset of it, then it carries an infinite-dimensional space of twistor spinors; this space turns out to be isomorphic to the sum of the space of holomorphic functions with that of anti-holomorphic functions on $M$, see e.g. \cite[Prop. A.2.3]{Ginoux09}.
As a consequence of Corollary \ref{c:2dimtwsp}, for any harmonic map $f\colon M^2\to N^n$, there exists a nonzero $\Phi\in C^\infty(M,\Sigma M\otimes f^*TN)$ such that $(f,\Phi)$ is a Dirac-harmonic map.
\item Since the kernel of the Penrose operator as well as harmonicity of $f$ are conformally invariant, the former for every $m$, see \cite{BFGK}, and the latter only for $m=2$, see \cite{EellsSampson64}, the examples described above are still valid when the metric $g$ is chosen in the conformal class of the standard metric on $M$.
In particular, the same kind of examples can be built on the hyperbolic plane $\mathbb{H}^2$ since it is conformally equivalent to a flat disk.
\item Nontrivial quotients of model surfaces may carry twistor spinors, this depends on the group that is divided out but also on the spin structure chosen on the quotient.
For instance, the only compact quotients of model surfaces carrying nontrivial twistor spinors are $M=\mathbb{S}^2$ and $M=\mathbb{T}^2$ where the latter carries the trivial spin structure, that is, the spin structure that is a trivial $2$-fold covering of the unit circle bundle over $M$. This spin structure can also be characterized as the only spin structure on $\mathbb{T}^2$ which is not obtained by restricting a spin structure on a solid torus to its boundary torus $\mathbb{T}^2$.
Note in particular that no nontrivial twistor spinor exists on closed hyperbolic surfaces, thus no example of the form above can be produced in that case.
\end{enumerate}}
\end{examples}

It is interesting to notice another consequence of Lemmata \ref{l:dheqgeneral-one} and \ref{l:dheqgeneral-two}:
\begin{proposition}\label{p:exdhhypspaceforms}
With the above notations, assume that $M^m$ carries a nontrivial parallel spinor.
Then for any Riemannian manifold $N^{m+1}$ and any harmonic map $f\colon M^m\to N^{m+1}$, there exists a non-identically-vanishing $\Phi\in C^\infty(M,\Sigma M\otimes f^*TN)$ such that $(f,\Phi)$ is a Dirac-harmonic map.
\end{proposition}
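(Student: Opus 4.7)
The plan is to take the ansatz
$$\Phi := \sum_{j=1}^m e_j\cdot\psi\otimes f_*e_j,$$
i.e.\ the piece of \eqref{eq:dhspecialform} obtained by setting $\phi=0$, so that no auxiliary normal vector field is needed and the formula makes sense for any smooth map $f$. It is globally well-defined on $M$ as the image of the frame-independent section $\sum_j e_j\cdot\psi\otimes e_j\in\Gamma(\Sigma M\otimes TM)$ under $\mathrm{id}_{\Sigma M}\otimes df$.

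To verify $D^f\Phi=0$, I would apply Lemma~\ref{l:dheqgeneral-one} with $\phi=0$. The last two summands in its formula drop out, leaving
$$D^f\Phi=\sum_{k=1}^m\left(\frac{2-m}{m}e_k\cdot D_M\psi-2P_{e_k}\psi\right)\otimes f_*e_k-\psi\otimes\mathrm{tr}_g(\nabla df),$$
which vanishes because a parallel spinor satisfies $D_M\psi=0$ and $P\psi=0$, while harmonicity of $f$ gives $\mathrm{tr}_g(\nabla df)=0$. The map equation $\mathrm{tr}_g(\nabla df)=\frac{1}{2}V_\Phi$ then reduces to $V_\Phi=0$, which is exactly the computation of $V_\Psi$ performed at the beginning of the proof of Lemma~\ref{l:dheqgeneral-two}: after dispensing with the terms having repeated indices via $e_j\cdot e_j\cdot\psi=-\psi$, the three-fold cyclic sum over distinct indices cancels via the first Bianchi identity, an argument that invokes neither the immersion hypothesis nor the vector $\nu$; the summands involving $\phi$ do not appear here since $\phi=0$.

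The only step requiring care is showing $\Phi\not\equiv 0$. A nontrivial parallel spinor on the connected manifold $M$ is nowhere zero, since its pointwise norm is parallel and hence constant. If $f$ is non-constant, pick $p$ with $df_p\neq 0$ and an orthonormal basis $e_1,\ldots,e_m$ of $T_pM$ such that $e_{r+1},\ldots,e_m$ span $\ker df_p$ for some $r\geq 1$; then $\Phi(p)=\sum_{i=1}^r e_i\cdot\psi(p)\otimes f_*e_i|_p$ is nonzero in $\Sigma_p M\otimes T_{f(p)}N$, because the vectors $f_*e_i|_p$ are linearly independent and each spinor $e_i\cdot\psi(p)$ has norm $|\psi(p)|\neq 0$ (Clifford multiplication by a unit vector is an isometry of $\Sigma_p M$). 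If instead $f$ is constant, the ansatz above vanishes identically, but then $f^*TN$ is a trivial bundle with flat pullback connection, and one replaces $\Phi$ by $\psi\otimes w$, where $w$ is any nonzero vector in the (single) image point regarded as a constant section; this $\Phi$ is parallel, so $D^f\Phi=0$ at once, while $V_\Phi=0$ and $\mathrm{tr}_g(\nabla df)=0$ are trivially satisfied.

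The main obstacle is really only the non-vanishing check, in particular the need to carve out the degenerate case of a constant map; once Lemmas~\ref{l:dheqgeneral-one} and~\ref{l:dheqgeneral-two} are at hand, both Dirac-harmonic equations follow immediately from the observation that a parallel spinor is simultaneously harmonic and a twistor spinor.
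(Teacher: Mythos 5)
Your proof is correct and follows essentially the same route as the paper: set $\phi=0$, apply Lemma~\ref{l:dheqgeneral-one} together with $D_M\psi=0$, $P\psi=0$ and $\mathrm{tr}_g(\nabla df)=0$ to get $D^f\Phi=0$, and observe that $V_\Phi=V_\Psi=0$ by the Bianchi-identity computation from the proof of Lemma~\ref{l:dheqgeneral-two}, which indeed uses neither the immersion hypothesis nor $\nu$. Your additional verification that $\Phi$ is not identically zero --- including the separate treatment of constant $f$ via $\psi\otimes w$ --- addresses a point the paper's proof passes over in silence, and it is handled correctly.
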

\begin{proof}
We let $\psi$ be a nonzero parallel spinor on $M$, $\varphi=0$ and define $\Phi$ as in \eqref{eq:dhspecialform}.
Because of $\phi=0$, $D_M\psi=0$ and $P\psi=0$ (any parallel spinor is both a harmonic spinor and a twistor spinor) as well as $\mathrm{tr}_g(\nabla df)=0$ since $f$ is harmonic, we have $D^f\Phi=0$.
On the other hand, because of $\phi=0$, we have $V_\Phi=0$ by Lemma \ref{l:dheqgeneral-two}, so that $\frac{V_{\Phi}}{2}=0=\mathrm{tr}_g(\nabla df)$.
Thus $(f,\Phi)$ is a Dirac-harmonic map.
\end{proof}

We now reformulate Lemmata~\ref{l:dheqgeneral-one} and~\ref{l:dheqgeneral-two} when $f\colon M^m\to N^n$ is an isometric immersion, $n=m+1$, the manifold $N^n$ is oriented. 
Again we identify in this case $TM$ with a subbundle of $f^*TN$.
Further let $\nu$ be the unit normal vector field induced by the orientations of $M$ and $N$.

\begin{prop}\label{p:dheqhypspaceforms}
Assume $f$ is an isometric immersion from $M^m$ into an oriented Riemannian manifold $N^{m+1}$, that $\Phi$ and $V_\Phi$ are given by \eqref{eq:dhspecialform} and \eqref{V.def} respectively, where $\nu$ is the unit normal vector field induced by the orientations of $M$ and $N$.
Then one has
\be 
D^f\Phi&=&\sum_{j=1}^m\left(\frac{2-m}{m}e_j\cdot D_M\psi-2P_{e_j}\psi-We_j\cdot\phi\right)\otimes e_j\\
& &{}+(D_M\phi-mH\psi)\otimes\nu.
\ee
Moreover, if $N$ has constant sectional curvature $c\in\R$, then $V_{\Phi}=-2mc\Re e\left(\langle\psi,\phi\rangle\right)\nu$.
\end{prop}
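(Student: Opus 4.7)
The proposition is a direct specialisation of Lemmas \ref{l:dheqgeneral-one} and \ref{l:dheqgeneral-two} to the setting of a codimension one isometric immersion, so my plan is to feed the geometric information of an isometric immersion into the two general formulas and simplify term by term.

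For the formula for $D^f\Phi$, I would start from Lemma \ref{l:dheqgeneral-one}. Under the identification $TM\subset f^*TN$, the pushforward satisfies $f_* e_j = e_j$, which immediately converts the first two summands into what we want up to the $-We_j\cdot\phi\otimes e_j$ contribution. Next I would rewrite $\mathrm{tr}_g(\nabla df)$. For an isometric immersion $\nabla df$ coincides with the second fundamental form, which takes values in the normal bundle; since $(\nabla df)(X,Y)=h(WX,Y)\nu$, taking the trace gives $\mathrm{tr}_g(\nabla df) = mH\nu$. This collapses the $-\psi\otimes\mathrm{tr}_g(\nabla df)$ term into $-mH\psi\otimes\nu$, which combines with $(D_M\phi)\otimes\nu$ as required. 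The one step that calls for a small manipulation is the last term $\sum_j e_j\cdot\phi\otimes\nabla_{e_j}^N\nu$: using $\nabla^N\nu = -W$ and the symmetry of $W$, I would expand $We_j=\sum_k h(We_j,e_k)e_k$ and swap indices to obtain $\sum_j e_j\cdot\phi\otimes\nabla_{e_j}^N\nu = -\sum_j We_j\cdot\phi\otimes e_j$. Assembling the four contributions yields exactly the claimed expression.

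For the formula for $V_\Phi$, I would apply Lemma \ref{l:dheqgeneral-two} and plug in the constant sectional curvature expression $R^N_{X,Y}Z = c\bigl(h(Y,Z)X - h(X,Z)Y\bigr)$. This gives
\[
h(R^N_{Y,e_j}e_k,\nu) = c\bigl(\delta_{jk} h(Y,\nu) - h(Y,e_k)h(e_j,\nu)\bigr) = c\,\delta_{jk}\,h(Y,\nu),
\]
since $e_j$ is tangent and $\nu$ is normal. The Kronecker delta forces $j=k$, so the double sum collapses to a single sum, and then the Clifford relation $e_j\cdot e_j = -1$ gives $\sum_j \Re e\langle e_j\cdot e_j\cdot\psi,\phi\rangle = -m\Re e\langle\psi,\phi\rangle$. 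Substituting back produces $h(V_\Phi,Y) = -2mc\,\Re e\langle\psi,\phi\rangle\, h(\nu,Y)$ for every $Y\in f^*TN$, which is equivalent to the stated identity $V_\Phi = -2mc\,\Re e\langle\psi,\phi\rangle\,\nu$.

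I do not expect a real obstacle here; the argument is entirely mechanical once the two lemmas are in hand. The only point requiring mild care is the bookkeeping in the last term of $D^f\Phi$, where the symmetry of $W$ together with the identity $\sum_{j,k} h(We_j,e_k)\,e_j\cdot\phi\otimes e_k = \sum_k We_k\cdot\phi\otimes e_k$ is needed to bring the expression into the form with $We_j\cdot\phi$ acting on the spinor factor rather than on $e_j$.
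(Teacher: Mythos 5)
Your proposal is correct and follows essentially the same route as the paper's proof: both specialise Lemmas \ref{l:dheqgeneral-one} and \ref{l:dheqgeneral-two} using $f_*e_j=e_j$, $\mathrm{tr}_g(\nabla df)=mH\nu$, the index swap via the symmetry of $W$ to obtain $-\sum_k We_k\cdot\phi\otimes e_k$, and the constant-curvature formula for $R^N$ to collapse the double sum in $V_\Phi$. No gaps.
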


\begin{proof}The identification mentioned above yields $f_*e_j=e_j$.
Using $\nabla df=W\otimes\nu$, one has $\mathrm{tr}_g(\nabla df)=\mathrm{tr}(W)\nu=mH\nu$. 
Moreover, since $\nabla_X^N\nu=-WX$ and $W$ is symmetric, Lemma~\ref{l:dheqgeneral-one} gives
\be 
D^f\Phi&=&\sum_{j=1}^m\left(\frac{2-m}{m}e_j\cdot D_M\psi-2P_{e_j}\psi\right)\otimes e_j-mH\psi\otimes\nu\\
& &{}+(D_M\phi)\otimes\nu-\sum_{j=1}^me_j\cdot\phi\otimes We_j\\
&=&\sum_{j=1}^m\left(\frac{2-m}{m}e_j\cdot D_M\psi-2P_{e_j}\psi\right)\otimes e_j+(D_M\phi-mH\psi)\otimes\nu\\
& &-\sum_{j,k=1}^mg(We_j,e_k)\,e_j\cdot\phi\otimes e_k\\
&=&\sum_{j=1}^m\left(\frac{2-m}{m}e_j\cdot D_M\psi-2P_{e_j}\psi\right)\otimes e_j+(D_M\phi-mH\psi)\otimes\nu\\
& &{}-\sum_{k=1}^m W(e_k)\cdot\phi\otimes e_k\\
&=&\sum_{j=1}^m\left(\frac{2-m}{m}e_j\cdot D_M\psi-2P_{e_j}\psi-We_j\cdot\phi\right)\otimes e_j\\
& &{}+(D_M\phi-mH\psi)\otimes\nu,
\ee
which proves the first identity.
Assume now that $(N^{m+1},h)$ has constant sectional curvature $c$.
Then the curvature tensor of $N$ is given for all $X,Y,Z,T\in TN$ by $h(R_{X,Y}^NZ,T)=c\cdot\left(h(X,T)h(Y,Z)-h(X,Z)h(Y,T)\right)$.
Lemma~\ref{l:dheqgeneral-two} yields for all $Y\in f^*TN$:
\be 
h(V_{\Phi},Y)&=&2\sum_{j,k=1}^m h(R_{Y,e_j}^Ne_k,\nu)\,\Re e(\langle e_j\cdot e_k\cdot\psi,\phi\rangle)\\
&=&2c\cdot\sum_{j,k=1}^m\left(h(Y,\nu)\underbrace{h(e_j,e_k)}_{\delta_{jk}}-h(Y,e_k)\underbrace{h(e_j,\nu)}_{0}\right)\Re e(\langle e_j\cdot e_k\cdot\psi,\phi\rangle)\\
&=&-2mc\,h(\nu,Y)\,\Re e(\langle\psi,\phi\rangle).
\ee
Thus 
$$h(V_{\Phi}+2mc\, \Re e(\langle\psi,\phi\rangle)\,\nu,Y)=0\qquad\forall\, Y\in f^*TN$$
which concludes the proof.
\end{proof}

\noindent Now we prove Theorem \ref{t:dhhypspaceforms}.
As $f$ is isometric,$\nabla df$ is the vector-valued second fundamental form of $f(M)$ in $N$, and we have $\mathrm{tr}_g(\nabla df)=mH\cdot \nu$.
Proposition~\ref{p:dheqhypspaceforms} implies that $(f,\Phi)$ is a Dirac-harmonic map if and only if $D_M\phi=mH\psi$, 
\begin{equation}\label{eq:ejPej.neu}
 \frac{2-m}{m}e_j\cdot D_M\psi-2P_{e_j}\psi-We_j\cdot\phi=0\text{ for all }1\leq j\leq m
\end{equation}
and $mH\cdot\nu=\frac{V_{\Phi}}{2}=-mc\Re e\left(\langle\psi,\phi\rangle\right)\nu$.
Note that taking the Clifford product of $e_j$ with \eqref{eq:ejPej.neu} and summing over $j$ gives, using the symmetry of $W$,
\begin{eqnarray}\label{eq:DMpsiphi}
\nonumber0&=&\frac{2-m}{m}\sum_{j=1}^me_j\cdot e_j\cdot D_M\psi-2\underbrace{\sum_{j=1}^me_j\cdot P_{e_j}\psi}_{0}-\sum_{j=1}^m e_j\cdot We_j\cdot\phi\\
&=&(m-2)D_M\psi+mH\phi.
\end{eqnarray}
{\it Case $m=2$}: Then it follows from (\ref{eq:DMpsiphi}) that $H\phi=0$.
Since on the open set $\Omega:=\{x\in M\,|\,H(x)\neq 0\}$ the spinor $\phi$ has to vanish, so does $\psi$ on $\Omega$ because of $D_M\phi=mH\psi$, so that $\Phi=0$ on $\Omega$ and therefore on $M$ by the unique continuation property for elliptic self-adjoint differential operators.
Since we look for a pair $(f,\Phi)$ with $\Phi\neq 0$, we necessarily have $\Omega=\varnothing$, that is, $H=0$ on $M$.
The identities $D_M\phi=mH\psi$, $H=-c\Re e\left(\langle\psi,\phi\rangle\right)$ become $D_M\phi=0$ and $c\Re e\left(\langle\psi,\phi\rangle\right)=0$ respectively.
Taking the Clifford product of $e_j$ with \eqref{eq:ejPej.neu} and recalling the definition of $P$, one obtains 
\be 
e_j\cdot We_j\cdot\phi&=&-2e_j\cdot P_{e_j}\psi\\
&=&-2e_j\cdot\nabla_{e_j}^{\Sigma M}\psi+D_M\psi
\ee
for both $j\in\{1,2\}$.
The difference of this equation for $j=1$ and the one for $j=2$ yields $e_2\cdot We_2\cdot\phi-e_1\cdot We_1\cdot\phi=2(e_1\cdot\nabla_{e_1}^{\Sigma M}\psi-e_2\cdot\nabla_{e_2}^{\Sigma M}\psi)$.
Take now $(e_j)_{1\leq j\leq 2}$ to be a pointwise orthonormal basis of $T_xM$ made of eigenvectors for $W$ for some fixed $x\in M$.
With the condition $H=0$ one can write $We_1=\kappa_1e_1$ and $We_2=-\kappa_1 e_2$, therefore one obtains
\begin{equation}\label{eq.fin}
2(e_1\cdot\nabla_{e_1}^{\Sigma M}\psi-e_2\cdot\nabla_{e_2}^{\Sigma M}\psi)=2\kappa_1\phi.
\end{equation}
As \eqref{eq.fin} implies \eqref{eq:ejPej.neu} trivially, 
this shows $i)$.\\

{\it Case $m\geq 3$}: It follows from (\ref{eq:DMpsiphi}) that $D_M\psi=-\frac{mH}{m-2}\phi$.
As a consequence, the assumption $W=H\cdot\mathrm{Id}$ (total umbilicity of $f$) makes \eqref{eq:ejPej.neu} equivalent to $P\psi=0$.
This proves the general case.
We now specialize to the case that $M$ is closed.
Then $D_M^2\psi=-\frac{m^2H^2}{m-2}\psi$ and $D_M^2\phi=-\frac{m^2H^2}{m-2}\phi$, see Remark~\ref{rem.H.const}.
Since $D_M^2$ is a nonnegative operator, it does not have any negative eigenvalue on a closed manifold, therefore $\psi=\phi=0$ unless $H=0$, which is the only possibility because of $\Phi\neq 0$.
Therefore $H$ --- hence $W$ --- has to vanish on $M$.
Since both $D_M\psi=0$ and $P\psi=0$, one obtains $\nabla^{\Sigma M}\psi=0$ (hence $\psi$ is actually parallel).
This shows $ii)$ and concludes the proof of Theorem \ref{t:dhhypspaceforms}.\\

\noindent We now prove Theorem \ref{t:dhtotumbilicalnonharmonic}.
Let $f\colon M^m\to \wit{N}^{m+1}(c)$ be a totally umbilical immersion with $m\geq 3$ and $W=H\cdot\mathrm{Id}\neq0$. 
Assume the pair $(f,\Phi)$ to be Dirac-harmonic.
Recall that then $M$ has to be noncompact (Theorem \ref{t:dhhypspaceforms}).
Since $P\psi=0$, we know that $D_M^2\psi=\frac{mS_g}{4(m-1)}\psi$, where $S_g$ is the scalar curvature of $(M^m,g)$, see e.g. \cite{BFGK} or \cite[Prop. A.2.1]{Ginoux09}.
Comparing with $D_M^2\psi=-\frac{m^2H^2}{m-2}\psi$ and assuming $\psi\neq 0$ (otherwise $\phi=0$ hence $\Phi=0$, as we have seen above), we obtain $\frac{mS_g}{4(m-1)}=-\frac{m^2H^2}{m-2}$ and the Gau\ss{} equation $S_g=m(m-1)c+m^2H^2-|W|^2=m(m-1)(H^2+c)$ implies $H^2=-\frac{m-2}{m+2}c$, in particular $c$ must be negative, w.l.o.g. $c=-1$.
Therefore $\wit{N}^{m+1}(c)=\mathbb{H}^{m+1}(-1)$.
In that case, $f(M)$ must be an open subset of a totally umbilical (but non-totally geodesic) hyperbolic hyperplane of constant sectional curvature $H^2+c=\frac{4}{m+2}c=-\frac{4}{m+2}<0$.
Up to changing $\nu$ into $-\nu$, one can assume $H$ to be positive, so that $H=\sqrt{\frac{m-2}{m+2}}$.
Now the space of twistor spinors on any hyperbolic space is explicitly known: it is the direct sum of the space of Killing spinors for the opposite (imaginary) Killing constants.
More precisely $\mathrm{ker}(P)=\mathcal{K}_p\oplus\mathcal{K}_m$ on $M$, where $\mathcal{K}_p:=\{\psi\in C^\infty(M,\Sigma M)\,|\,\nabla_X^{\Sigma M}\psi=\frac{i}{\sqrt{m+2}}X\cdot\psi\;\forall X\in TM\}$ and $\mathcal{K}_m:=\{\psi\in C^\infty(M,\Sigma M)\,|\,\nabla_X^{\Sigma M}\psi=-\frac{i}{\sqrt{m+2}}X\cdot\psi\;\forall X\in TM\}$. 
Looking for $\psi$ in the form $\psi=\psi_p+\psi_m$ with \emph{a priori} arbitrary $(\psi_p,\psi_m)\in\mathcal{K}_p\oplus\mathcal{K}_m$, we write the equations of Theorem \ref{t:dhhypspaceforms} down: one has $D_M\psi=-\frac{im}{\sqrt{m+2}}(\psi_p-\psi_m)$, in particular one has to choose $\phi:=-\frac{m-2}{mH}D_M\psi=i\sqrt{m-2}(\psi_p-\psi_m)$.
The formulas for $\psi$ and $\phi$ immediately imply  $D_M\phi=mH\psi$.
The only remaining condition having to be satisfied is $H=-c\cdot\Re e(\langle\psi,\phi\rangle)$, that is,
\be 
\sqrt{\frac{m-2}{m+2}}&=&\sqrt{m-2}\cdot\Re e(-i\langle\psi_p+\psi_m,\psi_p-\psi_m\rangle)\\
&=&\sqrt{m-2}\cdot\Im m(|\psi_p|^2-|\psi_m|^2+\langle\psi_m,\psi_p\rangle-\langle\psi_p,\psi_m\rangle)\\
&=&-2\sqrt{m-2}\cdot\Im m(\langle\psi_p,\psi_m\rangle),
\ee
that is, $\Im m(\langle\psi_p,\psi_m\rangle)=-\frac{1}{2\sqrt{m+2}}$.
Note that the inner product $\langle\psi_p,\psi_m\rangle$ is anyway constant on $M$ (its first derivative vanishes).
Evaluation at a point $x\in M$ yields linear maps $\ev_x^p\colon\mathcal{K}_p\to \Sigma_xM$ and $\ev_x^m\colon\mathcal{K}_m\to \Sigma_xM$ that are both injective (an imaginary Killing spinor is a parallel section w.r.t. a modified connection) and surjective (the hyperbolic space has the maximal possible number of imaginary Killing spinors).
Let $\tilde \psi_p:=\ev_x^p(\psi_p)$ and $\tilde \psi_m:=\ev_x^m(\psi_m)$. 
So in order to classify all admissible pairs $(\psi_p,\psi_m)$ it is sufficient to classify all pairs $(\tilde\psi_p,\tilde\psi_m)$ in $\Sigma_xM$ with $\Im m(\langle\tilde\psi_p,\tilde\psi_m\rangle)=-\frac{1}{2\sqrt{m+2}}$.
This is easy: for each non-zero $\tilde\psi_p\in \Si_xM$, let $\tilde\psi_m:=\frac{i}{2\sqrt{m+2}|\tilde\psi_p|^2}\tilde\psi_p$, then $\Im m(\langle\tilde\psi_p,\tilde\psi_m\rangle)=-\frac{1}{2\sqrt{m+2}}$ and obviously all admissible pairs are of the form $(\tilde\psi_p,\frac{i}{2\sqrt{m+2}|\tilde\psi_p|^2}\tilde\psi_p+\chi)$ where $\chi$ runs over the real hyperplane of $\Si_xM$ defined by the equation $\Im m(\langle\tilde\psi_p,\chi\rangle)=0$.

This concludes the proof of Theorem \ref{t:dhtotumbilicalnonharmonic}.\\

\section{Concluding remarks}

\noindent It may be interesting to know whether $2$-dimensional examples with $\phi\neq 0$ can be obtained from Theorem~\ref{t:dhhypspaceforms}.
Namely if one considers the Clifford torus $M^2:=\mathbb{S}^1(\frac{1}{\sqrt{2}})\times\mathbb{S}^1(\frac{1}{\sqrt{2}})$ sitting canonically in $N:=\mathbb{S}^3$, then the inclusion map is minimal (with principal curvatures $1$ and $-1$), but the following short argument shows that the only Dirac-harmonic maps $(f,\Phi)$ in the form (\ref{eq:dhspecialform}) have vanishing $\phi$-component.
Note at first that on the flat two-torus $M$ the Schr\"odinger-Licherowicz formula implies using $D_M\phi=0$ that $\phi$ is parallel.
Thus the statement is immediate if $M$ carries one of the three spin structures that do not allow for nonzero parallel spinor fields on the two-torus $M$. 
But even if the spin structure on $M^2$ is the one admitting parallel spinors, then any Dirac-harmonic map $(f,\Phi)$ in the form (\ref{eq:dhspecialform}) must have $\phi=0$ due to the following reason. 
We know from Theorem \ref{t:dhhypspaceforms} that $(f,\Phi)$ is a Dirac-harmonic map if and only if $H=0$ (which is the case here), $D_M\phi=0$, $c\cdot\Re e(\langle\psi,\phi\rangle)=0$, and $e_1\cdot\nabla_{e_1}^{\Sigma M}\psi-e_2\cdot\nabla_{e_2}^{\Sigma M}\psi=\kappa_1\phi$, where $We_1=\kappa_1 e_1$.
As mentioned above $D_M\phi=0$ is equivalent to $\phi$ being parallel.
But, taking into account that, in the particular example of the embedding $M^2\hookrightarrow\mathbb{S}^3$, the principal curvature $\kappa_1$ is constant and the vector fields $e_1,e_2$ are globally defined and \emph{parallel} on $M^2$, we have, differentiating w.r.t. $e_1$:
$$0=e_1\cdot\nabla_{e_1}^{\Si M}\nabla_{e_1}^{\Si M}\psi-e_2\cdot\nabla_{e_1}^{\Si M}\nabla_{e_2}^{\Si M}\psi$$
and in the same way $0=e_1\cdot\nabla_{e_2}^{\Si M}\nabla_{e_1}^{\Si M}\psi-e_2\cdot\nabla_{e_2}^{\Si M}\nabla_{e_2}^{\Si M}\psi$.
By $R^{\Si M}=0$ and $[e_1,e_2]=0$, we have $\nabla_{e_2}^{\Si M}\nabla_{e_1}^{\Si M}\psi=\nabla_{e_1}^{\Si M}\nabla_{e_2}^{\Si M}\psi$, so that 
\be 
0&=&e_1\cdot\nabla_{e_1}^{\Si M}\nabla_{e_1}^{\Si M}\psi-e_2\cdot\nabla_{e_2}^{\Si M}\nabla_{e_1}^{\Si M}\psi\\
&=&e_1\cdot\nabla_{e_1}^{\Si M}\nabla_{e_1}^{\Si M}\psi-e_2\cdot(-e_1\cdot e_2\cdot\nabla_{e_2}^{\Si M}\nabla_{e_2}^{\Si M}\psi)\\
&=&e_1\cdot\left(\nabla_{e_1}^{\Si M}\nabla_{e_1}^{\Si M}\psi+\nabla_{e_2}^{\Si M}\nabla_{e_2}^{\Si M}\psi\right)\\
&=&e_1\cdot(\nabla^{\Si M})^*\nabla^{\Si M}\psi\qquad\textrm{since }\nabla_{e_i}e_i=0,
\ee
so that $(\nabla^{\Si M})^*\nabla^{\Si M}\psi=0$, that is, $\nabla^{\Si M}\psi=0$.
In turn, this implies $\kappa_1\phi=0$ and therefore $\phi=0$ because of $\kappa_1\neq0$.
Actually we have shown that $(f,\Phi)$ in the form (\ref{eq:dhspecialform}) is a Dirac-harmonic map if and only if $\phi=0$ and $\psi$ is parallel.

In the case where $m=2$ no non-trivial example of Dirac-harmonic maps from a closed hyperbolic surface can be obtained with Corollary \ref{c:2dimtwsp}, since those do not carry non-zero twistor spinors.
In that setting, examples can be produced with the help of index-theoretical methods, see e.g. \cite{AmmannGinoux2011}. 
Curvature conditions implying the vanishing of the $\Phi$ defined in (\ref{eq:dhspecialform}) have been investigated by X. Mo \cite{Mo10} and confirm that only few examples of that special form can be expected.\\

\noindent For higher codimensions the same approach can probably be carried out, the existence of a global unit normal $\nu$ already restricting the generality.
On the other hand, there are in that case obvious examples of Dirac-harmonic maps which are \emph{not} in the form (\ref{eq:dhspecialform}): take e.g. $M:=\mathbb{S}^2=\mathbb{C}\mathrm{P}^1$ embedded totally geodesically into $N=\mathbb{C}\mathrm{P}^2$, then we know by the index theorem (see e.g. \cite{AmmannGinoux2011}) that $\mathrm{dim}_{\mathbb{C}}(\mathrm{ker}(D^f))\equiv 2\;(4)$ and is at least $4$-dimensional by \cite{JostMoZhu} (the space of twistor spinors on $\mathbb{S}^2$ injects into $\mathrm{ker}(D^f)$), so that it is at least - actually exactly - $6$-dimensional. 
Now if $\Phi\in\mathrm{ker}(D^f)$, then it is an easy remark that w.r.t. the canonical splitting $\Phi=\Phi_++\Phi_-$ one has $D^f\Phi_\pm=0$ and $V_{\Phi_\pm}=0$, in particular $(f,\Phi_+)$ and $(f,\Phi_-)$ are Dirac-harmonic maps; since $\mathrm{dim}_{\mathbb{C}}(\mathrm{ker}(D_\pm^f))\geq 3$ and the space of pure twistor spinors is complex $2$-dimensional, there are at least one non-trivial $\Phi_+\in \mathrm{ker}(D_+^f)$ and one non-trivial $\Phi_-\in \mathrm{ker}(D_-^f)$ such that $(f,\Phi_\pm)$ are Dirac-harmonic but do not come from any twistor spinor on $\mathbb{S}^2$.

\providecommand{\bysame}{\leavevmode\hbox to3em{\hrulefill}\thinspace}

\end{document}